\newtheorem{theorem}{Theorem}
\newtheorem{proposition}[theorem]{Proposition}
\newtheorem{corollary}[theorem]{Corollary}
\Crefname{conjecture}{Conjecture}{Conjectures}
\theoremstyle{plain}
\theoremstyle{plain}
\author{Maxwell Schneider and Robert Schneider}
\address{
Honors Program\newline
University of Georgia\newline
Athens, Georgia 30602}
\email{maxwell.schneider@uga.edu}
\address{Department of Mathematics\newline
University of Georgia\newline
Athens, Georgia 30602}
\email{robert.schneider@uga.edu}
\title{Digit sums and generating functions}
\begin{document}
\begin{abstract}
We connect a primitive operation from arithmetic --- summing the digits of a base-$B$ integer --- to $q$-series and product generating functions analogous to those in partition theory. We find digit sum generating functions to be intertwined with distinctive classes of ``$B$-ary'' Lambert series, which themselves enjoy nice transformations. We also consider digit sum Dirichlet series.
%
%
\end{abstract}

\maketitle


\section{Introduction and preliminaries}

For $B\geq 2$, let $s_B(a)$ denote the {\it digit sum} of a base-$B$ integer $a \geq 0$; for example, in base ten we have $s_{10}(73)=7+3=10$. Digit sums are interesting arithmetic objects, making numerous connections in number theory and combinatorics (see, for instance, \cite{AWR, Hurwitz, Kummer,qdigital, digitalbin2, Shallit, Tanay2, Tanay3, Watkins}). In this paper we apply $q$-series techniques to derive generating functions related to digit sums, and we prove other number-theoretic formulas. 

G. E. Andrews gives it as an exercise in \cite{Andrews} (p. 64, \#7) 
to prove for $B=10$ that 
\begin{equation}\label{eq0}
s_{B}(a)\equiv a\  (\operatorname{mod}\  B-1),
\end{equation}
which of course holds true for any base $B \geq 2$. 
Here we utilize exact identities 
analogous to 
congruence \eqref{eq0}, 
but involving a ``correction'' term $\widehat{c}_B$ to explicitly keep track of information about {\it carried numbers} (or ``carries'') from the traditional addition algorithm. 

%

%
%


We consider digit sums of the form $s_B(a_1+a_2+...+a_r)$ 
for positive integers $a_1,a_2,...,a_r$ written in base $B$. Let $c_B(a_1,a_2,...,a_r)$ denote the base-$B$ {\it carry sum}, the sum of all carried numbers produced in computing $a_1+a_2+...+a_r $ by the traditional addition algorithm. 

Then in the case $r=2$ (a sum of two integers), careful accounting of the steps of the addition algorithm produces the following identity, which the reader can readily verify
\footnote{Proposition \ref{theorem1} is equivalent to Theorem 2 in \cite{Kummer}; see also \cite{Watkins}.} 

\begin{proposition}\label{theorem1}
For base-$B$ integers $a,b\geq 0$, the digit sum of $a+b$ is given by 
\begin{equation*} 
s_B(a+b)=s_B(a)+s_B(b)- \widehat{c}_B(a,b)
\end{equation*} 
where $ \widehat{c}_B(a,b)=(B-1)c_B(a,b)$.
\end{proposition}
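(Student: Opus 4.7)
The plan is to expose the identity as a bookkeeping statement for the standard base-$B$ column-by-column addition algorithm. First I would write $a = \sum_{i \geq 0} a_i B^i$ and $b = \sum_{i \geq 0} b_i B^i$ with digits $a_i, b_i \in \{0, 1, \ldots, B-1\}$, let $d_i$ denote the $i$-th digit of $a+b$, and introduce carries $c_{-1} := 0$ and
\[
c_i := \left\lfloor \frac{a_i + b_i + c_{i-1}}{B} \right\rfloor \in \{0,1\} \qquad (i \geq 0),
\]
the inclusion in $\{0,1\}$ holding because $a_i + b_i + c_{i-1} \leq 2B-1$. By construction these are precisely the carried numbers produced by the traditional algorithm, so that $c_B(a,b) = \sum_{i \geq 0} c_i$, a sum which is finite because $c_i$ vanishes once $i$ exceeds the position of the leading digit of $a+b$.

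The central step is the local identity that defines the addition algorithm in column $i$,
\[
a_i + b_i + c_{i-1} = d_i + B c_i,
\]
which I would then sum over all $i \geq 0$. The left-hand side yields $s_B(a) + s_B(b) + \sum_{i \geq 0} c_{i-1}$, and a shift of index together with $c_{-1} = 0$ collapses the trailing sum to $c_B(a,b)$. The right-hand side yields $s_B(a+b) + B \cdot c_B(a,b)$. Equating the two and solving for $s_B(a+b)$ produces
\[
s_B(a+b) = s_B(a) + s_B(b) + c_B(a,b) - B\, c_B(a,b) = s_B(a) + s_B(b) - (B-1) c_B(a,b),
\]
which is exactly the claim, since $\widehat{c}_B(a,b) = (B-1)c_B(a,b)$ by definition.

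There is no serious obstacle beyond careful carry bookkeeping; the one place to be attentive is ensuring that the two index shifts on the two sides produce the \emph{same} sum $\sum_i c_i$, so that the net coefficient of $c_B(a,b)$ in the final identity is $-(B-1)$ rather than $-B$. As a consistency check, reducing the resulting identity modulo $B-1$ immediately recovers Andrews' congruence \eqref{eq0}, which provides a reassuring cross-reference with known arithmetic behavior of digit sums.
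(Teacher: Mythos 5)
Your argument is correct and is exactly the ``careful accounting of the steps of the addition algorithm'' that the paper invokes: the paper states Proposition \ref{theorem1} without proof, leaving the verification to the reader (with a pointer to \cite{Kummer}), and your column identity $a_i+b_i+c_{i-1}=d_i+Bc_i$ summed over $i$ is the intended bookkeeping, carried out correctly. No changes needed.
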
 
We note the above proposition suggests a recursion relation. For $1\leq k\leq n$, write
\begin{flalign*} 
s_B(n)&=s_B(n-k)+s_B(k)- \widehat{c}_B(n-k,k)\\ 
&=s_B(n-2k)+2s_B(k)  -\widehat{c}_B(n-k,k)- \widehat{c}_B(n-2k,k) \\
&=s_B(n-3k)+3s_B(k)  -\widehat{c}_B(n-k,k)- \widehat{c}_B(n-2k,k)- \widehat{c}_B(n-3k,k) =... 
.\end{flalign*} 
Following this process through to the final, $\left \lfloor {n}/{k}\right\rfloor$th stage (where $\left \lfloor x \right \rfloor$ denotes the floor function of $x\in \mathbb R$) proves a family of identities for $s_B$ depending on the choice of $k$. 
\begin{corollary}\label{thmhat}
For $1\leq k \leq n$ we can write the base-$B$ digit sum as 
\begin{equation*}
s_B(n)=s_B\left(n- k\left \lfloor \frac{n}{k}\right\rfloor \right)+ s_B(k)  \left \lfloor \frac{n}{k}\right\rfloor-\sum_{i=1}^{\left \lfloor  n/k\right\rfloor } \widehat{c}_B(n-ik,k).
\end{equation*}

\end{corollary}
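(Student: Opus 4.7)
The plan is to formalize the iterative pattern displayed in the paragraph just before the corollary by a straightforward induction on $j$, the number of times Proposition \ref{theorem1} has been applied. Concretely, I would prove the auxiliary statement that for every integer $j$ with $0 \leq j \leq \lfloor n/k \rfloor$,
$$s_B(n) \;=\; s_B(n - jk) + j\cdot s_B(k) - \sum_{i=1}^{j}\widehat{c}_B(n-ik,\,k),$$
and then specialize to $j = \lfloor n/k\rfloor$ to recover the corollary.

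The base case $j=0$ is immediate (the sum is empty and $s_B(n)=s_B(n)$). For the inductive step, I would assume the identity for some $j$ with $j+1 \leq \lfloor n/k\rfloor$, so that $n-(j+1)k \geq 0$. Then I can apply Proposition \ref{theorem1} to the non-negative integers $a = n-(j+1)k$ and $b = k$, whose sum is $n-jk$, obtaining
$$s_B(n-jk) \;=\; s_B(n-(j+1)k) + s_B(k) - \widehat{c}_B(n-(j+1)k,\,k).$$
Substituting this into the inductive hypothesis and absorbing the new terms into the existing sum yields the statement for $j+1$.

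Taking $j = \lfloor n/k\rfloor$ and observing that $n - jk = n - k\lfloor n/k\rfloor$ is precisely the remainder on division by $k$, I arrive at the stated identity. The only point that needs mild care is the hypothesis under which Proposition \ref{theorem1} is invoked at each stage, namely that both summands are non-negative; this is exactly why the iteration halts at $j=\lfloor n/k\rfloor$ and not earlier or later. I do not expect a genuine obstacle: the corollary is a bookkeeping consequence of Proposition \ref{theorem1}, and the induction serves mainly to convert the suggestive ellipsis ``$=\ldots$'' in the preceding display into a rigorous statement.
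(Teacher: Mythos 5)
Your proposal is correct and is exactly the argument the paper intends: the display preceding the corollary is the same iteration of Proposition \ref{theorem1} with $b=k$, and the paper simply asserts that carrying it to the $\lfloor n/k\rfloor$th stage gives the result. Your induction on $j$ merely makes the ellipsis rigorous, with the nonnegativity check $n-(j+1)k\geq 0$ correctly identifying why the process terminates at $j=\lfloor n/k\rfloor$.
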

%
%

It also follows from the addition algorithm that we can extend Proposition \ref{theorem1} 
to the case of a sum of $r>2$ nonnegative base-$B$ integers $a_1, a_2, ..., a_r$. 
However, the generalization of the correction term $\widehat{c}_B$ is more complicated than just $(B-1)c_B$ when there are more than two summands, as we must keep up with carried numbers greater than $1$, which can yield arbitrarily large contributions to the digit sum. To this end, let us set 
\begin{equation}\label{correction0}
\widehat{c}_B\left( a_1,a_2,...,a_r \right):=\beta-s_B(\beta)+(B-1)c_B\left( a_1,a_2,...,a_r \right),\end{equation} 
in which $\beta$ denotes the {\it terminal 
carry} arising in the addition algorithm, the carried number left over from the sum of the left-most column of digits, which in turn comprises the left-most digits of the base-$B$ integer $a_1+a_2+...+a_r$.  


\begin{theorem}\label{proposition1} 
For base-$B$ integers $a_i \geq 0$, the digit sum of $a_1+a_2+...+a_r$ is given by  
\begin{equation*} 
s_B\left( \sum_{i=1}^{r} a_i \right)=\sum_{i=1}^{r }s_B(a_i)- \widehat{c}_B\left( a_1,a_2,...,a_r \right).
\end{equation*} 
\end{theorem}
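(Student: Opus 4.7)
The plan is to track the traditional addition algorithm column by column and keep careful account of how the terminal carry contributes to the final digit sum. First I would set up notation: writing $a_i = \sum_{j \ge 0} d_{i,j} B^j$ with digits $d_{i,j} \in \{0,1,\ldots,B-1\}$, let $S_j := \sum_{i=1}^{r} d_{i,j}$ be the column sum at position $j$, and let $\kappa_j$ denote the carry entering column $j$, with $\kappa_0 = 0$. The addition algorithm then produces at each column the identity $S_j + \kappa_j = d_j + B \kappa_{j+1}$, where $d_j \in \{0,1,\ldots,B-1\}$ is the $j$-th digit of the output. Choosing $N$ large enough that $S_j = 0$ for $j > N$, the terminal carry is $\beta = \kappa_{N+1}$, and the digits of $a_1 + \cdots + a_r$ consist of $d_0, d_1, \ldots, d_N$ together with the digits of $\beta$ occupying the leftmost positions, so that
\[
s_B\!\left(\sum_{i=1}^{r} a_i\right) \;=\; \sum_{j=0}^{N} d_j \,+\, s_B(\beta).
\]

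Next I would sum the per-column relation $d_j = S_j + \kappa_j - B\kappa_{j+1}$ over $j = 0, 1, \ldots, N$. The column sums contribute $\sum_{j} S_j = \sum_{i} s_B(a_i)$, while the carry terms form a near-telescope that simplifies to $-(B-1)\sum_{j=1}^{N} \kappa_j - B\beta = -(B-1)(c_B - \beta) - B\beta = -(B-1)c_B - \beta$, using $c_B = \sum_{j \ge 1} \kappa_j$. Substituting back, adding $s_B(\beta)$ to both sides, and invoking the definition \eqref{correction0} of $\widehat{c}_B$ would then yield the claimed identity.

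The main (and essentially only) subtle point is handling the terminal carry correctly. In the two-summand setting of Proposition \ref{theorem1} every carry lies in $\{0,1\}$, so $s_B(\beta) = \beta$ and the discrepancy $\beta - s_B(\beta)$ vanishes; but for $r \ge 3$ the terminal carry can be arbitrarily large, and it is the \emph{digits} of $\beta$, not $\beta$ itself, that enter $s_B(\sum a_i)$. The compensating summand $\beta - s_B(\beta)$ in \eqref{correction0} is precisely what reconciles these two quantities in the algebra above; once that gap is flagged, the rest of the argument is essentially bookkeeping.
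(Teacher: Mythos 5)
Your proof is correct and follows the same approach as the paper: tracking the addition algorithm column by column, with the key observation that the terminal carry $\beta$ contributes $s_B(\beta)$ rather than $\beta$ to the digit sum, which is exactly what the $\beta - s_B(\beta)$ summand in \eqref{correction0} compensates for. The paper gives only a one-paragraph verbal sketch of this bookkeeping, so your explicit per-column identity $S_j + \kappa_j = d_j + B\kappa_{j+1}$ and the telescoping sum actually make the argument more rigorous than the published version.
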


\begin{proof}
The term $\widehat{c}_B$ is defined in \eqref{correction0} precisely to make the statement true: the roles of the $s_B(a_i)$ and carry sum $(B-1){c}_B$ arise naturally from the addition algorithm as in Proposition \ref{theorem1}, but an additional copy of the carry $\beta$ from the left-most column of the algorithm must be subtracted as it is ``brought down'' to form the left-most digits of the resulting sum; thus $s_B(\beta)$ contributes to the digit sum as well. 
\end{proof}

Note by comparison to \eqref{eq0} that since $s_B\left(a_1+a_2+...+a_r \right)\equiv a_1+a_2+...+a_r \equiv s_B(a_1)+s_B(a_2)+...+s_B(a_r)\  (\operatorname{mod}\  B-1)$, this generalized correction term still has the property 
\begin{flalign} \label{congr}
\widehat{c}_B\left( a_1,a_2,...,a_r \right)\equiv 0 \  (\operatorname{mod}\  B-1).
\end{flalign} 
Using Theorem \ref{proposition1} in the case that all $a_i$'s are equal, we can also find the digit sum of a {\it product} of two numbers. 
To state the result concisely, 
let us define 
\begin{equation}\label{correction} \widehat{c}_B(\left\{ a \right\}^b):= \widehat{c}_B(a,a,a,...,a)\end{equation} 
for the correction term 
when computing $a+a+a+...+a$ ($b$ times).
%

\begin{corollary}\label{proposition2}
For base-$B$ integers $a,b\geq 0$, the digit sum of the product $ab$ 
is given by 
\begin{flalign*} 
s_B(ab)=b\cdot s_B(a)- \widehat{c}_B(\left\{ a \right\}^b).
\end{flalign*} 
\end{corollary}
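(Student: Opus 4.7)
The plan is to observe that for a nonnegative integer $b$, the product $ab$ is simply the iterated sum $a+a+\cdots+a$ with $b$ summands, and then to invoke Theorem \ref{proposition1} directly. So first I would write
\[
ab \;=\; \underbrace{a+a+\cdots+a}_{b \text{ summands}},
\]
which is valid for all base-$B$ integers $a,b \geq 0$ (with the degenerate cases $a=0$ or $b=0$ giving $s_B(0)=0$ on both sides).

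Next I would apply Theorem \ref{proposition1} to the right-hand side, setting $r = b$ and $a_1=a_2=\cdots=a_b=a$. The theorem then yields
\[
s_B(ab) \;=\; s_B\!\left(\sum_{i=1}^{b} a\right) \;=\; \sum_{i=1}^{b} s_B(a) \;-\; \widehat{c}_B(a,a,\ldots,a),
\]
where there are $b$ copies of $a$ inside the correction term. The first sum collapses to $b \cdot s_B(a)$, and by the notational convention \eqref{correction}, the correction term is precisely $\widehat{c}_B(\{a\}^b)$, which completes the proof.

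There is essentially no obstacle here: the corollary is a direct specialization of Theorem \ref{proposition1}, and the only substantive content is the observation that multiplication by $b$ can be reduced to repeated addition so that the correction term for the product is exactly the one bookkeeping the carries in the iterated sum. The notational shorthand \eqref{correction} is introduced for exactly this purpose, so no further unpacking of $\widehat{c}_B$ via the terminal carry $\beta$ or the carry sum $c_B$ is required at this stage.
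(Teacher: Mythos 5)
Your proof is correct and follows exactly the route the paper intends: the corollary is stated immediately after the remark ``Using Theorem \ref{proposition1} in the case that all $a_i$'s are equal,'' so the specialization $r=b$, $a_1=\cdots=a_b=a$ together with the shorthand \eqref{correction} is precisely the paper's (implicit) argument. Your handling of the degenerate cases $a=0$ or $b=0$ is a small bonus of care beyond what the paper writes down.
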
 

Proposition \ref{theorem1} and Corollary \ref{proposition2} together show $s_B$ is linear up to correction terms (and thus by \eqref{congr} is a linear function modulo $B-1$). Comparing Corollary \ref{proposition2} with Corollary \ref{thmhat} ($n=ab, k=a$) gives a formula for 
$\widehat{c}_B(\{a\}^b)$ written in terms of simpler correction terms: 
%
\begin{equation}\label{corhat}
\widehat{c}_B(\{a\}^b)
=\sum_{j=0}^{b-1} \widehat{c}_B(aj,a).
\end{equation}
%
%
We note for $1\leq a \leq B-1$ the carry sum ${c}_B(a,a,...,a)$ ($b$ repetitions) has the value \begin{equation}\label{carry2}{c}_B(a,a,...,a)=\left \lfloor ab/B \right \rfloor.\end{equation}%

In fact, by the addition algorithm the above identities extend further, to sums of nonnegative {rational} numbers $a,b, a_i,$ etc., having terminating base-$B$ decimal expansions. 

\section{Digit sums and $q$-series}

It is natural to ask how digit sums connect to the theory of integer partitions 
 --- after all, the digits of integers represent weak integer compositions with parts strictly less than $B$ --- and thus to the study of $q$-series generating functions (see \cite{Andrews_theory}), where we take $q\in \mathbb C, |q|<1$, as usual. 
Andrews \cite{Andrewsmary}, Hirschhorn-Sellers \cite{HirschhornSellers} and others have studied closely-related $m$-ary partitions; and in recent work, Mansour-Nguyen \cite{qdigital} prove finite $q$-series involving $s_2(n)$ related to 
the $q$-binomial theorem\footnote{For example, summing both sides of Theorem 3 in \cite{qdigital} over $0\leq k\leq N$ and letting ${N\to \infty}$ gives a digit sum proof of the identity $\prod_{n=1}^{\infty}(1-q^n)\sum_{k=0}^{\infty}{q^{k(k-1)/2}}{\prod_{i=1}^{k}(1-q^i)^{-1}}=\prod_{n=0}^{\infty}(1+q^n)$.}, 
and Vignat-Wakhare \cite{Tanay3} give a number of interesting base-$B$ digit sum generating function relations. 
We seek product-sum generating functions of the $q$-hypergeometric variety (see \cite{Fine}). 
Classical $q$-series 
methods along the lines of \cite{Andrewsmary, HirschhornSellers} 
give a very nice 
two-variable identity. 

\begin{theorem}\label{twovarthm}
We have for $B\geq 2$ and $z\in \mathbb C, |q|<1$, that
\begin{flalign*} \sum_{n=0}^{\infty}q^n z^{s_B(n)}=\prod_{i=0}^{\infty}\frac{1-z^Bq^{B^{i+1}}}{1-zq^{B^i}}.
\end{flalign*}
\end{theorem}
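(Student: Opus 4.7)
The plan is to derive a simple functional equation for
$$F(q,z):=\sum_{n=0}^{\infty}q^n z^{s_B(n)}$$
by splitting each $n\geq 0$ via the base-$B$ division algorithm, and then iterate. Write $n=Bm+r$ with $m\geq 0$ and $0\leq r\leq B-1$ uniquely; here $r$ is the last digit of $n$ and $m$ is the integer formed by the remaining (higher-order) digits. Since $r<B$ no carry occurs when recomposing $n=Bm+r$, so Theorem \ref{proposition1} (or just direct inspection of the digit string) gives $s_B(n)=r+s_B(m)$.

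Substituting this decomposition and separating the two sums,
\begin{equation*}
F(q,z)=\sum_{m=0}^{\infty}\sum_{r=0}^{B-1}q^{Bm+r}z^{s_B(m)+r}=\left(\sum_{r=0}^{B-1}(qz)^r\right)\sum_{m=0}^{\infty}(q^{B})^{m}z^{s_B(m)}=\frac{1-z^{B}q^{B}}{1-zq}\,F(q^{B},z),
\end{equation*}
using the finite geometric sum $\sum_{r=0}^{B-1}x^{r}=(1-x^{B})/(1-x)$ with $x=qz$. Iterating this functional equation $k$ times gives
\begin{equation*}
F(q,z)=F(q^{B^{k}},z)\prod_{i=0}^{k-1}\frac{1-z^{B}q^{B^{i+1}}}{1-zq^{B^{i}}}.
\end{equation*}

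The final step is to let $k\to\infty$. Since $|q|<1$, we have $q^{B^{k}}\to 0$ doubly exponentially fast, so in the defining series for $F(q^{B^{k}},z)$ only the $n=0$ term survives in the limit, giving $F(q^{B^{k}},z)\to 1$. The infinite product likewise converges absolutely on account of the same doubly exponential decay of $q^{B^{i}}$, producing the claimed identity.

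The main obstacle is essentially bookkeeping rather than a genuine difficulty: one must be careful to justify the interchange of the double sum and the limit, and to verify absolute convergence of both the original series and the infinite product in the appropriate domain (for instance when $|q|<1$ and $z$ is restricted so that no factor $1-zq^{B^{i}}$ vanishes). An entirely equivalent, arguably more transparent, route is to bypass the functional equation and use uniqueness of the base-$B$ representation directly: writing $n=\sum_{i\geq 0}d_{i}B^{i}$ with $0\leq d_{i}\leq B-1$ factors $F(q,z)$ at once as $\prod_{i\geq 0}\sum_{d=0}^{B-1}(zq^{B^{i}})^{d}$, which collapses to the right-hand side by the same geometric sum identity. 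Either derivation is short; I would present the functional-equation version, since it fits the $q$-series spirit of the paper and mirrors the methods of \cite{Andrewsmary, HirschhornSellers} cited just above.
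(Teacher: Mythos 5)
Your proposal is correct, and your primary route differs from the paper's. The paper proves the identity by directly expanding the infinite product as a generating function for the unique base-$B$ representation of each $n$ (its equation \eqref{fundgenfctn}, with $f(i)=z^i$ and each factor summed as a finite geometric series) --- this is precisely the ``equivalent, more transparent route'' you sketch in your final paragraph. Your main argument instead extracts the single-step functional equation $F(q,z)=\frac{1-z^Bq^B}{1-zq}\,F(q^B,z)$ from the division algorithm $n=Bm+r$ and iterates; the digit-sum additivity $s_B(Bm+r)=s_B(m)+r$ you need is immediate since no carries occur. Both arguments are sound. What the functional-equation version buys is that its $k$-fold iteration is literally the identity \eqref{funceq1} that the paper records separately just after the theorem (there obtained by substituting $q\mapsto q^{B^j}$ into the finished product), so your derivation unifies the theorem with that remark; the paper's direct expansion buys the more general one-line template \eqref{fundgenfctn} for arbitrary digit statistics $f$, which is closer in spirit to \cite{Andrewsmary, HirschhornSellers}. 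Your limit step $F(q^{B^k},z)\to 1$ is justified as you say: since $s_B(n)=O(\log n)$, the factor $z^{s_B(n)}$ grows at most polynomially in $n$, so the tail of the series vanishes as $q^{B^k}\to 0$.
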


\begin{proof}
It is evident by direct expansion of the right-hand product below for $f:\mathbb Z_{\geq 0} \to \mathbb C$ with $f(0):=1$, that we have a generating function for the unique partition of each $n\geq 1$ into distinct parts of the shape $mB^i$ for $1\leq m \leq B-1$, where no two parts have the same exponent $i\geq 0$ of $B$, i.e., a generating function for the base-$B$ expansion of $n$:
\begin{equation}\label{fundgenfctn}
\sum_{n=0}^{\infty}q^n\prod_{\text{digits $\nu_i$ of $n$}}f(\nu_i)=\prod_{i=0}^{\infty}\left(1+f(1)q^{B^i}+f(2)q^{2B^i}+...+f(B-1)q^{(B-1)B^i}     \right),
\end{equation}
in which the product on the left is taken over the base-$B$ digits $\nu_i$ of $n$ (i.e., $\nu_i\leq B-1$ is the nonnegative coefficient of $B^i$ for $0 \leq i\leq  \lfloor \operatorname{log}_B n \rfloor$). 
Taking $f(i)=z^i$, then recognizing the terms of the right-hand product as finite geometric series, gives Theorem \ref{twovarthm}. 
\end{proof}

Note as $s_B(n)=n$ when $n<B$, both sides of Theorem \ref{twovarthm} approach $(1-zq)^{-1}$ as  $B\to \infty$; similarly, setting $z=1$ reduces both sides to $(1-q)^{-1}$. Furthermore, one observes that taking $q\mapsto q^{B^j}, j\geq 0,$ effectively cancels the first $j$ terms of the product: 
\begin{equation}\label{funceq1}
\sum_{n=0}^{\infty}q^{nB^j} z^{s_B(n)}\  =\  \prod_{i=j}^{\infty}\frac{1-z^Bq^{B^{i+1}}}{1-zq^{B^i}}\  =\  \prod_{i=0}^{j-1}\frac{1-zq^{B^i}}{1-z^Bq^{B^{i+1}}}\sum_{n=0}^{\infty}q^{n} z^{s_B(n)}.
\end{equation}
In fact \eqref{funceq1} holds for any finite $j\in \mathbb Z$, not just $j\geq 0$. Other changes of variables result in further nice digit sum identities, like the following two-variable generating function.

\begin{corollary}\label{twovarthm2.5}
We have for $B\geq 2$ and $z\in \mathbb C, |q|<1$, that
\begin{flalign*} \sum_{n=0}^{\infty}q^n z^{\widehat{c}_B(\{ 1\}^n)}\  =\  \prod_{i=0}^{\infty}\frac{1-z^{B(B^i-1)}q^{B^{i+1}}}{1-z^{B^i-1}q^{B^i}}.
\end{flalign*}\end{corollary}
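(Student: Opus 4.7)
The plan is to reduce this identity to Theorem \ref{twovarthm} via a change of variables, after first re-expressing the exponent $\widehat{c}_B(\{1\}^n)$ in terms of the ordinary digit sum $s_B(n)$.

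First I would apply Corollary \ref{proposition2} with $a = 1$ and $b = n$. Since $s_B(1) = 1$, this gives $s_B(n) = n - \widehat{c}_B(\{1\}^n)$, so that $\widehat{c}_B(\{1\}^n) = n - s_B(n)$. Substituting this in, the left-hand side becomes
\begin{equation*}
\sum_{n=0}^{\infty} q^n z^{n - s_B(n)} = \sum_{n=0}^{\infty} (qz)^n (z^{-1})^{s_B(n)}.
\end{equation*}

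Next I would apply Theorem \ref{twovarthm} with the substitutions $q \mapsto qz$ and $z \mapsto z^{-1}$ (which is legitimate provided $|qz| < 1$, and then the identity extends to the full region of convergence by analytic continuation in $z$). The right-hand side becomes
\begin{equation*}
\prod_{i=0}^{\infty}\frac{1 - z^{-B}(qz)^{B^{i+1}}}{1 - z^{-1}(qz)^{B^i}} = \prod_{i=0}^{\infty}\frac{1 - z^{B^{i+1} - B}q^{B^{i+1}}}{1 - z^{B^i - 1}q^{B^i}} = \prod_{i=0}^{\infty}\frac{1 - z^{B(B^i - 1)}q^{B^{i+1}}}{1 - z^{B^i - 1}q^{B^i}},
\end{equation*}
which is exactly the product claimed.

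There is no real obstacle here: the proof is essentially a one-line consequence of the identity $\widehat{c}_B(\{1\}^n) = n - s_B(n)$ combined with Theorem \ref{twovarthm}. The only minor care needed is to note that the substitution $z \mapsto z^{-1}$ produces negative exponents of $z$ in the intermediate step, so one should either work formally in $\mathbb{C}((z))[[q]]$ or restrict to a suitable annulus in $z$ and extend by analyticity.
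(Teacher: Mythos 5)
Your proposal is correct and follows essentially the same route as the paper: the paper likewise applies the substitution $z\mapsto z^{-1}$, $q\mapsto zq$ to Theorem \ref{twovarthm} and then uses Corollary \ref{proposition2} with $a=1$, $b=n$ to identify $n-s_B(n)$ with $\widehat{c}_B(\{1\}^n)$. Your added remark about the domain of validity of the substitution is a reasonable refinement but not a substantive difference.
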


\begin{proof}
Taking $z\mapsto z^{-1},q\mapsto zq$ in Theorem \ref{twovarthm}, then Corollary \ref{proposition2} ($a=1, b=n$) gives  
\begin{flalign} \sum_{n=0}^{\infty}q^n z^{n-s_B(n)} =  \sum_{n=0}^{\infty}q^n z^{\widehat{c}_B(\{1\}^{n})}.
\end{flalign}
\end{proof}

Along similar lines, squaring both sides of Theorem \ref{twovarthm} 
gives another interesting identity.

\begin{corollary}\label{twovarcor}
We have for $B\geq 2$ and $z\in \mathbb C, |q|<1$, that
$$\sum_{n=0}^{\infty}q^nz^{s_B(n)}\sum_{k=0}^{n}z^{\widehat{c}_B(n-k,k)}=\prod_{i=0}^{\infty}\left( \frac{1-z^Bq^{B^{i+1}}}{1-zq^{B^i}}\right)^2.$$
\end{corollary}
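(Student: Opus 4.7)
The plan is to follow the hint in the text and simply square both sides of Theorem~\ref{twovarthm}. The right-hand side immediately becomes the claimed product, so the work is on the left-hand side, where I will apply the Cauchy product to the square of the generating function and then use Proposition~\ref{theorem1} to rewrite the exponent of $z$.

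More concretely, first I would write
\begin{equation*}
\left(\sum_{n=0}^{\infty} q^n z^{s_B(n)}\right)^{\!2} \;=\; \sum_{n=0}^{\infty} q^n \sum_{k=0}^{n} z^{s_B(k) + s_B(n-k)},
\end{equation*}
which is the standard convolution expansion of a squared power series (valid in the region $|q|<1$ where the series converges absolutely). Next, I would invoke Proposition~\ref{theorem1} in the form $s_B(k) + s_B(n-k) = s_B(n) + \widehat{c}_B(n-k,k)$, so that the exponent of $z$ decomposes into a piece depending only on $n$ and a piece depending on the splitting $n = (n-k) + k$.

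Substituting this identity and pulling the factor $z^{s_B(n)}$ out of the inner sum then yields
\begin{equation*}
\sum_{n=0}^{\infty} q^n z^{s_B(n)} \sum_{k=0}^{n} z^{\widehat{c}_B(n-k,k)},
\end{equation*}
which matches the left-hand side of the corollary. Equating this with the squared product from Theorem~\ref{twovarthm} completes the argument.

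There is no real obstacle here; the only conceptual step is noticing that Proposition~\ref{theorem1} is precisely the identity needed to convert $s_B(k)+s_B(n-k)$ into a term that factors as $z^{s_B(n)}$ times a correction, which is exactly what makes the sum on the left-hand side of the corollary natural. Everything else is the routine Cauchy product of two absolutely convergent $q$-series.
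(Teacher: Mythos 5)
Your proposal is correct and is essentially identical to the paper's proof: square Theorem \ref{twovarthm}, apply the Cauchy product, and use Proposition \ref{theorem1} in the form $s_B(n-k)+s_B(k)=s_B(n)+\widehat{c}_B(n-k,k)$ to factor out $z^{s_B(n)}$.
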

 
\begin{proof}
After we square both sides of Theorem \ref{twovarthm}, the corollary results from applying the Cauchy product formula on the left-hand side in light of Proposition \ref{theorem1}: 
\begin{equation}
\left(\sum_{n=0}^{\infty}q^n z^{s_B(n)}\right)^2=\sum_{n=0}^{\infty}q^n \sum_{k=0}^{n} z^{s_B(n-k)+s_B(k)}=\sum_{n=0}^{\infty}q^n \sum_{k=0}^{n} z^{s_B(n)+\widehat{c}_B(n-k,k)}.
\end{equation}
\end{proof}

We observe in passing that the product on the right-hand side of Theorem \ref{twovarthm} can also be expressed in the following $q$-hypergeometric form\footnote{This formula looks like an extended version of identities used in \cite{HirschhornSellers}.}. 

\begin{theorem}\label{twovarthm3}
We have for $B\geq 2$ and $z\in \mathbb C, |q|<1$, that
\begin{flalign*}\prod_{i=0}^{\infty}\frac{1-z^Bq^{B^{i+1}}}{1-zq^{B^i}}\  =\  \frac{1}{1-zq} \  +\  \frac{z-z^B}{1-z^Bq}\sum_{n=1}^{\infty}q^{B^{n}}\frac{\prod_{j=0}^{n-1} (1-z^Bq^{B^{j}})}{\prod_{j=0}^{n}(1-zq^{B^j})}.
\end{flalign*}
\end{theorem}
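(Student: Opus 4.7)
My plan is to prove the identity by finding a closed form for the partial sums of the series on the right-hand side and then passing to the limit. Define
\begin{equation*}
T_N := \frac{1}{1-zq} + \frac{z-z^B}{1-z^B q}\sum_{n=1}^{N} q^{B^n}\frac{\prod_{j=0}^{n-1}(1-z^B q^{B^j})}{\prod_{j=0}^{n}(1-zq^{B^j})}.
\end{equation*}
Computing the first few cases $N=0,1,2$ quickly suggests the closed form
\begin{equation*}
T_N = \frac{\prod_{j=1}^{N}(1-z^B q^{B^j})}{\prod_{j=0}^{N}(1-zq^{B^j})},
\end{equation*}
where the numerator is an empty product when $N=0$.

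I would prove this formula by induction on $N$. The base case $N=0$ is immediate. For the inductive step, the key observation is that the denominator factor $(1-z^B q)$ in the prefactor $(z-z^B)/(1-z^B q)$ exactly cancels the $j=0$ term in $\prod_{j=0}^{N}(1-z^B q^{B^j})$ appearing in the $n=N+1$ summand. After using the inductive hypothesis on $T_N$ and placing everything over the common denominator $\prod_{j=0}^{N+1}(1-zq^{B^j})$, the computation reduces to verifying the elementary algebraic identity
\begin{equation*}
(1-zq^{B^{N+1}}) + (z-z^B)q^{B^{N+1}} = 1 - z^B q^{B^{N+1}},
\end{equation*}
which absorbs the extra factor into the numerator product and yields the claim for $N+1$.

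Finally, since $|q|<1$, letting $N\to\infty$ gives
\begin{equation*}
\lim_{N\to \infty}T_N \  =\  \frac{\prod_{j=1}^{\infty}(1-z^B q^{B^j})}{\prod_{j=0}^{\infty}(1-zq^{B^j})} \  =\  \prod_{i=0}^{\infty}\frac{1-z^B q^{B^{i+1}}}{1-zq^{B^i}}
\end{equation*}
upon reindexing $i=j-1$ in the numerator, which is precisely the left-hand side of Theorem \ref{twovarthm3}. The argument does not invoke any of the earlier theorems; it is purely a telescoping identity in the style of Heine's transformation. The only real obstacle is guessing the correct closed form for $T_N$, after which the induction is a short and clean manipulation.
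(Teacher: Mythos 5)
Your proof is correct and is essentially the paper's argument run in reverse: the paper multiplies the product by $(1-z^Bq)$ and telescopes the partial products $\prod_{j=0}^{N}\frac{1-z^Bq^{B^j}}{1-zq^{B^j}}$ into the series, while you sum the series back into the partial products by induction, both steps hinging on the same identity $(1-zq^{B^{N+1}})+(z-z^B)q^{B^{N+1}}=1-z^Bq^{B^{N+1}}$. Indeed $(1-z^Bq)T_N$ is exactly the paper's $N$-th partial product, so the two computations coincide step for step.
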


\begin{proof}
Multiply the product on the left side by $(1-z^Bq)$ and expand the result as a telescoping series, which we write in this form:
\begin{flalign}\prod_{i=0}^{\infty}\frac{1-z^Bq^{B^i}}{1-zq^{B^i}}
= \frac{1-z^Bq}{1-zq} + \sum_{n=0}^{\infty}\frac{\prod_{j=0}^{n} (1-z^Bq^{B^j})}{\prod_{j=0}^{n}(1-zq^{B^j})}\left(\frac{1-z^Bq^{B^{n+1}}}{1-zq^{B^{n+1}}}-1\right).
\end{flalign}
Simplifying the summands on the right-hand side, then dividing both sides of the equation by $(1-z^Bq)$ and adjusting indices appropriately, completes the proof. \end{proof}

To give a $q$-series generating function for $s_B(n)$, we first 
introduce an auxiliary series $L_B(q)$ that represents a $B$-ary variant of classical Lambert series\footnote{A variant of $L_2(q)$ is studied in \cite{Tanay3}.} for $|q|<1$: 
\begin{equation}
L_B(q):=\sum_{i=1}^{\infty}\frac{q^{B^i}}{1-q^{B^i}}.
\end{equation}
Clearly $L_B(q)\to0$ as $B\to \infty$. We note in passing the interesting property of this series that, as in \eqref{funceq1}, taking $q\mapsto q^{B^j}$ effectively shifts the indices: 
\begin{equation}\label{shift}
L_B(q^{B^j})=\sum_{i=j+1}^{\infty}\frac{q^{B^i}}{1-q^{B^i}}\  \  \text{for any $j\in \mathbb Z$}.
\end{equation}
The series $L_B(q)$ 
is a primary component of 
the following generating function\footnote{We note that Corollary \ref{s_Bgen} is equivalent to Theorem 1 of \cite{AWR} and Theorem 12 of \cite{Tanay3}.} for $s_B(n)$. 

\begin{corollary}\label{s_Bgen}
We have for $B\geq 2$ and $|q|<1$, that 
\begin{equation*} 
\sum_{n=1}^{\infty}s_B(n)q^n\  =\  \frac{q}{(1-q)^2}-\frac{B-1}{1-q}L_B(q).
\end{equation*}
\end{corollary}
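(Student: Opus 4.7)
The plan is to obtain Corollary \ref{s_Bgen} as a direct consequence of Theorem \ref{twovarthm} by differentiating both sides with respect to $z$ and evaluating at $z=1$. On the left, since $\frac{\partial}{\partial z} z^{s_B(n)} \big|_{z=1} = s_B(n)$, differentiation produces exactly the desired generating function $\sum_{n=0}^{\infty} s_B(n) q^n$ (and the $n=0$ term vanishes because $s_B(0)=0$).

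For the right-hand side, I would use logarithmic differentiation. Writing $P(z)=\prod_{i=0}^{\infty}\frac{1-z^Bq^{B^{i+1}}}{1-zq^{B^i}}$, I have $P'(z)=P(z)\cdot(\log P(z))'$, and at $z=1$ Theorem \ref{twovarthm} itself gives $P(1)=\sum_{n=0}^{\infty}q^n = \frac{1}{1-q}$. The logarithmic derivative at $z=1$ is
\[
(\log P(z))'\bigl|_{z=1} = \sum_{i=0}^{\infty}\left(\frac{q^{B^i}}{1-q^{B^i}} - \frac{Bq^{B^{i+1}}}{1-q^{B^{i+1}}}\right).
\]

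The key simplification is recognizing both sums in terms of $L_B(q)$. The first sum separates as the $i=0$ contribution $\frac{q}{1-q}$ plus $\sum_{i=1}^{\infty}\frac{q^{B^i}}{1-q^{B^i}} = L_B(q)$. The second sum, after reindexing $i\mapsto i-1$, is exactly $B\cdot L_B(q)$. Combining yields $\frac{q}{1-q} - (B-1)L_B(q)$, and multiplying by $P(1)=\frac{1}{1-q}$ gives the claimed right-hand side.

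The main obstacle, such as it is, is justifying term-by-term differentiation of the infinite product (equivalently, the interchange of differentiation with the infinite sum in the logarithmic derivative). This is routine for $|q|<1$ since the product converges uniformly on compact subsets of a neighborhood of $z=1$ in the domain where no factor vanishes, but it is worth a brief remark. Everything after that is elementary algebra with geometric series.
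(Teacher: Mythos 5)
Your proposal is correct and is essentially the paper's own argument: the paper likewise applies the (logarithmic) derivative $z\frac{\mathrm{d}}{\mathrm{d}z}$ to Theorem \ref{twovarthm}, evaluates at $z=1$ where the product collapses to $\frac{1}{1-q}$, and identifies the resulting sums as $\frac{q}{1-q}-(B-1)L_B(q)$, merely packaging the logarithmic derivative in the two-variable series $\mathcal L_B(x;q)$ rather than writing it out termwise as you do. Your computation and the convergence remark are both fine.
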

\

\begin{proof} To prove the theorem, we utilize the following two-variable extension\footnote{This two-variable function transforms just like \eqref{shift} under $q\mapsto q^{B^j}$.} of $L_B(q)$:
\begin{equation}\label{shift2}
\mathcal L_B(x;q)\  :=\  \sum_{i=0}^{\infty}\frac{q^{B^i}}{1-xq^{B^i}}\  =\  \sum_{n=0}^{\infty}x^n\sum_{i=0}^{\infty}q^{(n+1)B^i}
\end{equation}
where $|xq|<1$ and the right-hand sum results after recognizing the terms of the first sum as geometric series. 
Note the series defining $\mathcal L_B(x;q)$ begins at $i=0$ instead of $1$, thus \begin{equation}\label{shift3} L_B(q)=\mathcal L_B(1;q)-q/(1-q).\end{equation} Then applying the differential operator $z \frac{\text{d}}{\text{d}z}$ to both sides of Theorem \ref{twovarthm} yields 
\begin{equation}\label{identity} \sum_{n=1}^{\infty}s_B(n)q^n z^{s_B(n)}=\prod_{i=0}^{\infty}\frac{1-z^Bq^{B^{i+1}}}{1-zq^{B^i}}\left[z \mathcal L_B(z;q)-Bz^B \mathcal L_B(z^B; q^B) \right].\end{equation}
Letting $z=1$, all the numerators and denominators of the infinite product on the right cancel except the denominator  $(1-q)^{-1}$ of the $i=0$ factor. Noting by \eqref{shift} and \eqref{shift3} that \begin{equation} \mathcal L_B(1;q)-B\mathcal L_B(1;q^{B})=\frac{q}{1-q}-(B-1)L_B(q) \end{equation}
then completes the proof. 
\end{proof}

We point out both sides of the theorem approach $q \frac{\text{d} }{\text{d}q} (1-q)^{-1}$ as $B\to \infty$. Combining Corollary \ref{s_Bgen} with Proposition \ref{theorem1} shows that in fact $L_B(q)$ (multiplied by $B-1$) represents 
the generating function for the correction term $\widehat{c}_B(n-1, 1)$.\footnote{Without the multiplicative factor, $L_B(q)$ generates the carry sum $c_B(n-1,1)=\widehat{c}_B(n-1,1)/(B-1)$.}

\begin{corollary}\label{shiftcor} We have for $B\geq 2$ and $|q|<1$ that
$$\sum_{n=1}^{\infty} \widehat{c}_B(n-1,1)q^n =  (B-1)L_B(q).$$ 
\end{corollary}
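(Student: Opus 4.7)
The plan is to derive the identity by expressing $\widehat{c}_B(n-1,1)$ as a difference of digit sums via Proposition \ref{theorem1}, and then to recognize the resulting generating function as a simple linear combination of the one evaluated in Corollary \ref{s_Bgen}. Concretely, Proposition \ref{theorem1} with $a=n-1$ and $b=1$ (so $s_B(1)=1$) yields
\[
\widehat{c}_B(n-1,1) \;=\; s_B(n-1) + 1 - s_B(n).
\]
This telescoping-like relation is the crux of the argument.

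Next I would multiply by $q^n$ and sum from $n=1$ to $\infty$, splitting the right-hand side into three pieces:
\[
\sum_{n=1}^{\infty}\widehat{c}_B(n-1,1)q^n \;=\; \sum_{n=1}^{\infty}s_B(n-1)q^n + \sum_{n=1}^{\infty}q^n - \sum_{n=1}^{\infty}s_B(n)q^n.
\]
Shifting the index in the first sum (using $s_B(0)=0$) gives $q\sum_{n=1}^\infty s_B(n)q^n$, so the whole expression collapses to
\[
-(1-q)\sum_{n=1}^{\infty}s_B(n)q^n + \frac{q}{1-q}.
\]

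Finally I would substitute the closed form from Corollary \ref{s_Bgen}, namely $\sum_{n\geq 1}s_B(n)q^n = \tfrac{q}{(1-q)^2} - \tfrac{B-1}{1-q}L_B(q)$. The $-(1-q)$ factor cancels the denominator $(1-q)^2$ in the first term and leaves $(B-1)L_B(q)$ from the Lambert-type term; the leftover $-q/(1-q)$ is annihilated by the explicit $+q/(1-q)$, yielding exactly $(B-1)L_B(q)$.

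I do not anticipate a serious obstacle; the only thing to be careful about is the index shift and confirming $s_B(0)=0$ so that no boundary term is lost. The entire proof is essentially a one-line consequence of Proposition \ref{theorem1} combined with Corollary \ref{s_Bgen}, and the footnote about $c_B$ follows immediately upon dividing by $B-1$ and using the definition $\widehat{c}_B(a,b)=(B-1)c_B(a,b)$ from Proposition \ref{theorem1}.
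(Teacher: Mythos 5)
Your proposal is correct and is essentially the paper's own argument: both apply Proposition \ref{theorem1} with $a=n-1$, $b=1$ to write $\widehat{c}_B(n-1,1)=s_B(n-1)+1-s_B(n)$, sum against $q^n$ to produce the factor $(1-q)\sum_{n\geq 1}s_B(n)q^n$, and then invoke Corollary \ref{s_Bgen}. The index shift and the check $s_B(0)=0$ are handled correctly.
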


\begin{proof} We simply note that setting $a=n-1, b=1$ in Proposition \ref{theorem1} gives
\begin{equation}\label{identity2}
(1-q)\sum_{n=1}^{\infty}{s}_B(n)q^n= \sum_{n=1}^{\infty} \left({s}_B(n)-{s}_B(n-1)\right)q^n=\sum_{n=1}^{\infty}q^n-\sum_{n=1}^{\infty}\widehat{c}_B(n-1,1)q^n.
\end{equation}
Comparing \eqref{identity2} with Corollary \ref{s_Bgen} gives the corollary. 
 \end{proof}
%

The series $L_B(q)$ also essentially generates the sequence of correction terms $\widehat{c}_B(\{1\}^n)$.

\begin{corollary}\label{c_Bgen}
We have for $B\geq2$ and $|q|<1$ that
\begin{equation*}
\sum_{n=1}^{\infty}\widehat{c}_B(\{1\}^n) q^n
=\frac{B-1}{1-q}L_B(q).\end{equation*}
\end{corollary}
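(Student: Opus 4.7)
The plan is to reduce the sum to previously established generating functions, with essentially no real work required. The key preliminary step is to apply Corollary \ref{proposition2} with $a = 1$ and $b = n$: since $s_B(1) = 1$ for every $B \geq 2$, this yields the pointwise identity
$$\widehat{c}_B(\{1\}^n) \;=\; n - s_B(n) \qquad (n \geq 1).$$
Multiplying by $q^n$ and summing over $n \geq 1$, I would split the generating function as
$$\sum_{n=1}^{\infty}\widehat{c}_B(\{1\}^n)\,q^n \;=\; \sum_{n=1}^{\infty} n\,q^n \;-\; \sum_{n=1}^{\infty}s_B(n)\,q^n.$$
The first series on the right is the standard $q/(1-q)^2$, and the second is exactly what Corollary \ref{s_Bgen} evaluates, namely $q/(1-q)^2 - (B-1)L_B(q)/(1-q)$. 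The $q/(1-q)^2$ contributions cancel, leaving the claimed $(B-1)L_B(q)/(1-q)$.

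A second, equally clean route would bypass Corollary \ref{s_Bgen} entirely and use only Corollary \ref{shiftcor}: insert identity \eqref{corhat} to write $\widehat{c}_B(\{1\}^n) = \sum_{j=0}^{n-1}\widehat{c}_B(j,1)$, then interchange summations, producing the factor $\sum_{n=j+1}^{\infty}q^n = q^{j+1}/(1-q)$, and finally recognize $\sum_{m=1}^{\infty}\widehat{c}_B(m-1,1)\,q^m = (B-1)L_B(q)$ by Corollary \ref{shiftcor}. Either way, the proof is a two-line manipulation and there is no genuine obstacle; the content lies entirely in the earlier corollaries, and the present statement is essentially a bookkeeping consequence of combining Corollary \ref{proposition2} (to rewrite $\widehat{c}_B(\{1\}^n)$) with the known generating function for $s_B(n)$.
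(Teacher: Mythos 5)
Your proposal is correct, and your first route is genuinely different from the paper's argument. The paper proves this corollary by setting $a=1$ in the recursion \eqref{recursion}, so that $(1-q)\sum_{n\geq1}\widehat{c}_B(\{1\}^n)q^n=\sum_{n\geq1}\widehat{c}_B(n-1,1)q^n$, and then invoking Corollary \ref{shiftcor}; your second route is exactly this computation in resummed form (interchanging the order of summation in \eqref{corhat} rather than telescoping after multiplying by $1-q$), so it is essentially the same proof. Your first route instead uses the pointwise identity $\widehat{c}_B(\{1\}^n)=n-s_B(n)$ from Corollary \ref{proposition2} together with the closed form of Corollary \ref{s_Bgen}, letting the two copies of $q/(1-q)^2$ cancel. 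That version is arguably cleaner in that it needs neither \eqref{corhat} nor Corollary \ref{shiftcor}, and it is close in spirit to the paper's own remark following the corollary that the result also falls out of applying $z\,\frac{\mathrm{d}}{\mathrm{d}z}$ to Corollary \ref{twovarthm2.5} at $z=1$ (which encodes the same identity $\widehat{c}_B(\{1\}^n)=n-s_B(n)$). The paper's route, by contrast, exhibits the corollary as the $a=1$ case of the more general recursion-based theorem that immediately follows it. Both arguments are complete; the only small point worth making explicit in your second route is that $\widehat{c}_B(0,1)=0$, so the $j=0$ term contributes nothing and the reindexing to Corollary \ref{shiftcor} is exact.
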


\begin{proof}
It is equivalent to \eqref{corhat} that we have the recursion relation
\begin{equation}\label{recursion}
\widehat{c}_B(\{a\}^{b})=\widehat{c}_B(\{a\}^{b-1})+\widehat{c}_B(a(b-1), a).\end{equation}
Setting $a=1$, and noting $\widehat{c}_B(\{ a \}^0)=0$ holds for all $a\geq 0,B\geq 2$, we then can write
\begin{equation}\label{recursion2}
(1-q)\sum_{n=1}^{\infty}\widehat{c}_B(\{1\}^{n})q^n=\sum_{n=1}^{\infty}\left( \widehat{c}_B(\{1\}^{n})-\widehat{c}_B(\{1\}^{n-1})\right)q^n=\sum_{n=1}^{\infty}\widehat{c}_B(n-1, 1)q^n.\end{equation}
Comparing \eqref{recursion2} to Corollary \ref{shiftcor} completes the proof.
\end{proof}
As in the proof of Corollary \ref{s_Bgen}, the preceding corollary also equals the result of applying $z \frac{\text{d}}{\text{d}z}$ throughout Corollary \ref{twovarthm2.5}, then evaluating at $z=1$. Moreover, Corollary \ref{c_Bgen} highlights a special case of a more general identity tying together these two primary instances of $\widehat{c}_B$. 

\begin{theorem}
We have for $B\geq2$ and $|q|<1$ that
\begin{equation*}
\sum_{n=1}^{\infty}\widehat{c}_B(\{a\}^n) q^n
=\frac{q}{1-q}\sum_{n=0}^{\infty} \widehat{c}_B(an,a)q^n.
\end{equation*}
\end{theorem}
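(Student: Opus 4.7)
The plan is to mimic the telescoping argument used in the proof of Corollary \ref{c_Bgen}, but now for general $a$ rather than the special case $a=1$. The key ingredient is the recursion relation \eqref{recursion}, namely
\[
\widehat{c}_B(\{a\}^{b})=\widehat{c}_B(\{a\}^{b-1})+\widehat{c}_B(a(b-1),a),
\]
together with the initial condition $\widehat{c}_B(\{a\}^0)=0$ (which holds for every $a\geq 0,B\geq 2$, as noted earlier).

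The first step is to multiply the left-hand side of the theorem by $(1-q)$ so that the sum telescopes along the recursion: after re-indexing one finds
\[
(1-q)\sum_{n=1}^{\infty}\widehat{c}_B(\{a\}^{n})q^n
=\sum_{n=1}^{\infty}\bigl(\widehat{c}_B(\{a\}^{n})-\widehat{c}_B(\{a\}^{n-1})\bigr)q^n,
\]
where the $n=1$ term uses $\widehat{c}_B(\{a\}^0)=0$. Applying the recursion to each difference collapses the right-hand side to $\sum_{n=1}^{\infty}\widehat{c}_B(a(n-1),a)\,q^n$. Next, factoring out a single $q$ and shifting the summation variable $n\mapsto n+1$ rewrites this as $q\sum_{n=0}^{\infty}\widehat{c}_B(an,a)q^n$. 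Dividing both sides by $(1-q)$ yields the claimed identity.

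The main obstacle is really just bookkeeping: one has to be careful that the telescoping absorbs the $n=1$ term correctly (which is why the $\widehat{c}_B(\{a\}^0)=0$ initial condition is essential), and that the index shift produces a sum starting at $n=0$ rather than $n=1$, so that the factor $q/(1-q)$ appears exactly as stated. No convergence issue arises beyond $|q|<1$, since the recursion shows $\widehat{c}_B(\{a\}^n)$ grows at most polynomially in $n$ for fixed $a,B$ and so the series in question converges absolutely in the unit disk.
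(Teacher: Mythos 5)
Your proposal is correct and follows essentially the same route as the paper: multiply by $(1-q)$, telescope via the recursion $\widehat{c}_B(\{a\}^{n})=\widehat{c}_B(\{a\}^{n-1})+\widehat{c}_B(a(n-1),a)$ with $\widehat{c}_B(\{a\}^0)=0$, then shift the index $n\mapsto n+1$. The only difference is cosmetic — your added remark on convergence — so there is nothing further to adjust.
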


\begin{proof}
From \eqref{recursion} we can write more generally
\begin{equation}\label{recursion3}
(1-q)\sum_{n=1}^{\infty}\widehat{c}_B(\{a\}^{n})q^n=\sum_{n=1}^{\infty}\left( \widehat{c}_B(\{a\}^{n})-\widehat{c}_B(\{a\}^{n-1})\right)q^n=\sum_{n=1}^{\infty}\widehat{c}_B(a(n-1), a)q^n.\end{equation}
Adjusting the index of summation (viz. $n\mapsto n+1$) on the right leads to the theorem.
\end{proof}

One wonders if combinatorial interpretations for correction term identities such as these, as well as generating functions for the general case $\widehat{c}_B(a_1, a_2,..., a_r)$, may be found. 

In our study, we noticed further $q$-series connections we wish to follow up on. 
For instance, the type of index-shifting phenomenon highlighted in \eqref{shift} is naturally embedded in $q$-series related to $L_B(q)$ (and \eqref{fundgenfctn}); 
interrelations and congruences between coefficients may be found. 
We note that a similar shift of indices in a three-variable bilateral variant of $L_B(q)$:
\begin{equation}\widehat{\mathcal L}_B(x;z,q):=\sum_{n\in \mathbb Z}\frac{z^nq^{B^n}}{1-xq^{B^n}},\end{equation} valid if $|z|>B, x\neq q^{-B^n}$ for $n\in\mathbb Z$, yields for $r,t\in\mathbb Z, t\neq 0,$ functional equations such as 
\begin{flalign}\widehat{\mathcal L}_{B^{-1}}(x;z^{-1},q)=\widehat{\mathcal L}_B(x;z,q),  \  \  \  \  \  \  \  \  \  \    z^r\widehat{\mathcal L}_B(x;z,q^{B^r})=\widehat{\mathcal L}_B(x;z,q)\end{flalign}
and 
 \begin{flalign}
  z^r\widehat{\mathcal L}_{B^t}(x;z^t,q^{B^r})=\sum_{n \equiv r\  (\text{mod}\  t)}\frac{z^nq^{B^n}}{1-xq^{B^n}}.
 \end{flalign}
Fleeting explorations produced other nice relations, suggesting to the authors $\widehat{\mathcal L}_B(x;z,q)$ may be worthy of further study.

Connections to $q$-series such as those in \cite{qdigital}, \cite{Tanay3}, et al., and those given here, 
show that digit sums fit nicely into number theory and combinatorics in the neighborhood of partitions, unimodal sequences, compositions and other summatory structures. 

\section{Other digit sum connections}

In this section we briefly point out connections 
between digit sums and other generating functions from 
number theory (see \cite{AWR, Tanay3} for many further examples). 
If we define 
the divisor sum\footnote{This is an analog of 
the classical {\it sum of divisors function} $\sigma(n)$. Note $S_B(n)$ is {equal} to $\sigma(n)$ if $n<B$, and generally, the difference between the functions (by Corollary \ref{proposition2}) is $\sigma(n)-S_B(n)=\sum_{d|n}\widehat{c}_B(\{ 1\}^d)$.} 
%
%
\begin{equation}\label{partC}
S_B(n):=\sum_{d|n}s_B(d),
\end{equation}
it 
follows from standard methods\footnote{E.g. see p. 300, Prob. 11 of \cite{BB}} that for $|q|<1$, the {\it Lambert series generating function} for $s_B(n)$ is the $q$-series generating function for $S_B(n)$:
\begin{equation}
\sum_{n=1}^{\infty}\frac{s_B(n) q^n}{1-q^n}=\sum_{n=1}^{\infty}S_B(n)q^n.
\end{equation}
Similarly, classical Dirichlet convolution formally connects the {\it Dirichlet series generating functions} for $s_B(n)$ and $S_B(n)$ to the {\it Riemann zeta function} $\zeta(s):=\sum_{n=1}^{\infty}1/n^s$, viz. 
%
\begin{equation}\label{dirichlet2}
     \zeta(s) \sum_{n=1}^{\infty}\frac{s_B(n)}{n^s}= \sum_{n=1}^{\infty}\frac{S_B(n)}{n^s},\end{equation}
although the convergence of these series is not obvious {\it a priori}. Now, $\zeta(s)$ converges 
for $\operatorname{Re}(s)>1$, but since $1\leq s_B(n)\leq s_{B'}(n) \leq n$ for $B<B'$, 
convergence of \eqref{dirichlet2} also depends on the base $B$ in each instance. Using the fact $s_B(n)=n$ for $n<B$ gives 
\begin{equation}
\lim_{B\to\infty}\sum_{n=1}^{\infty}\frac{s_B(n)}{n^s}=\zeta(s-1), 
\end{equation}
so we may uniformly assume $\text{Re}(s)>2$ to ensure convergence in these series 
at any $B\geq 2$.

Many interesting identities relating $s_B(n)$ 
to 
$\zeta(s)$ as well as the {\it Hurwitz zeta function}
\begin{equation}\label{hurwitz} \zeta(s,x):=\sum_{n=0}^{\infty}\frac{1}{(n+x)^s}\  \  \  \  (x\in (0,1], \operatorname{Re}(s)>1)\end{equation}
can be found in the literature (for instance, see  \cite{Hurwitz, Tanay3}). Below we prove 
a Dirichlet series identity\footnote{We note that in light of Proposition \ref{theorem1}, Theorem \ref{cthm1} is equivalent to Corollary 2 of \cite{Hurwitz}.} related to the correction term $\widehat{c}_B$, as well as one related to the carry sum $c_B$. 

%
\begin{theorem} \label{cthm1}
We have for $\operatorname{Re}(s)>2$ that
$$ \sum_{n=1}^{\infty}\frac{\widehat{c}_B(n-1,1)}{n^s}= \frac{B-1}{B^s-1}\zeta(s).$$
\end{theorem}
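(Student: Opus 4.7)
The plan is to first identify $\widehat{c}_B(n-1,1)$ in closed arithmetic form, and then recognize the resulting Dirichlet series as a rearrangement of $\zeta(s)$.

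My starting point is Corollary \ref{shiftcor}, which asserts
$$\sum_{n=1}^{\infty}\widehat{c}_B(n-1,1)\,q^n \;=\; (B-1)L_B(q)\;=\;(B-1)\sum_{i=1}^{\infty}\frac{q^{B^i}}{1-q^{B^i}}.$$
Expanding each summand as $q^{B^i}/(1-q^{B^i})=\sum_{m\geq 1}q^{mB^i}$ and reading off the coefficient of $q^n$, I would obtain
$$\widehat{c}_B(n-1,1)\;=\;(B-1)\cdot\#\{(i,m):i\geq1,\,m\geq 1,\,mB^i=n\}\;=\;(B-1)\,v_B(n),$$
where $v_B(n):=\max\{k\geq 0:B^k\mid n\}$ is the exponent of the largest power of $B$ dividing $n$. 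Combinatorially this matches the direct observation that in computing $(n-1)+1$ by the addition algorithm, each trailing digit equal to $B-1$ in the base-$B$ expansion of $n-1$ produces exactly one carry, and the number of such trailing digits is precisely $v_B(n)$.

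With this identification in hand, the Dirichlet series is a standard rearrangement. Writing $v_B(n)=\sum_{k\geq 1}\mathbf{1}[B^k\mid n]$ and interchanging the order of summation (justified by absolute convergence for $\Re(s)>2$) gives
$$\sum_{n=1}^{\infty}\frac{\widehat{c}_B(n-1,1)}{n^s}\;=\;(B-1)\sum_{k=1}^{\infty}\sum_{B^k\mid n}\frac{1}{n^s}\;=\;(B-1)\sum_{k=1}^{\infty}\frac{1}{B^{ks}}\sum_{m=1}^{\infty}\frac{1}{m^s},$$
after parameterizing the inner sum by $n=B^k m$. Summing the geometric series in $B^{-s}$ yields
$$(B-1)\zeta(s)\cdot\frac{B^{-s}}{1-B^{-s}}\;=\;\frac{B-1}{B^s-1}\zeta(s),$$
as claimed.

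The only step requiring any real insight is the closed-form identification $\widehat{c}_B(n-1,1)=(B-1)v_B(n)$; everything afterward is routine Dirichlet bookkeeping. I expect the main obstacle, if any, will be making the extraction of coefficients from $(B-1)L_B(q)$ fully rigorous (i.e., justifying the double-sum rearrangement in $q$ and asserting uniqueness of power series coefficients), but both are standard given $|q|<1$.
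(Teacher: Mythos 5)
Your proof is correct and follows essentially the same route as the paper: both hinge on the closed form $\widehat{c}_B(n-1,1)=(B-1)v_B(n)$ (the paper states it as $(B-1)N$ for $n=mB^N$ with $B\nmid m$) followed by a routine Dirichlet rearrangement, yours via the indicator sum $v_B(n)=\sum_{k\geq1}\mathbf{1}[B^k\mid n]$ and the paper's via grouping $n$ by exact valuation and summing $\sum_N N u^N=u/(1-u)^2$ at $u=B^{-s}$. Your derivation of the closed form from Corollary \ref{shiftcor} (or the direct trailing-digit argument) is a clean substitute for the paper's stated identification.
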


\begin{proof}
We begin by observing that, for $t=\lfloor \operatorname{log}_B n \rfloor$ 
the highest power of $B$ in the base-$B$ expansion of $n\geq 1$, we have 
\begin{equation}
\widehat{c}_B(n-1,1)
= (B-1)\sum_{i=1}^{t}\left \lfloor \frac{n}{B^i}\right\rfloor= \left\{
        \begin{array}{ll}
            (B-1)N & \text{if $n=mB^N$ with $B\nmid m$,}\\
            
            0 & \text{otherwise.}
        \end{array}
    \right.
\end{equation} 
Then we can rewrite
\begin{equation}
\sum_{n=1}^{\infty}\frac{\widehat{c}_B(n-1,1)}{n^s}=(B-1)\sum_{N=1}^{\infty}\sum_{\substack{k\geq 1 \\ B \nmid k}}\frac{N}{(kB^N)^s}=(B-1)\left(1-\frac{1}{B^s}\right)\zeta(s) \sum_{N=1}^{\infty}\frac{N}{B^{sN}}.
\end{equation}
Recognizing the sum on the far right side as $u \frac{\text{d}}{\text{d}u}(1-u)^{-1}=u/(1-u)^2$ evaluated at $u=1/B^s$, and simplifying appropriately, yields 
Theorem \ref{cthm1}. 
%
\end{proof}

We let $c_B(\{1\}^n)$ denote the carry sum $c_B(1,1,...,1)$ ($n$ repetitions) for the next result.

\begin{theorem}\label{dir2}
We have for $\operatorname{Re}(s)>2$ that
$$ \sum_{n=1}^{\infty}\frac{c_B(\{1\}^n)}{n^s}= \frac{1}{B}\zeta(s-1)-\frac{1}{B^{s+1}}\sum_{k=1}^{B-1} k   \zeta(s,k/B).$$

\end{theorem}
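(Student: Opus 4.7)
My plan is to reduce the Dirichlet series to elementary sums via the explicit formula for $c_B(\{1\}^n)$ and then recognize the resulting pieces as $\zeta(s-1)$ and Hurwitz zeta values.

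First I would use equation \eqref{carry2} with $a=1$, $b=n$ to observe the clean identity
\[
c_B(\{1\}^n) = \lfloor n/B \rfloor,
\]
so the series to be evaluated is $\sum_{n\geq 1} \lfloor n/B\rfloor/n^s$. The key algebraic move is to write $\lfloor n/B\rfloor = (n - (n\bmod B))/B$, which splits the series as
\[
\sum_{n=1}^{\infty}\frac{\lfloor n/B \rfloor}{n^s} = \frac{1}{B}\sum_{n=1}^{\infty}\frac{1}{n^{s-1}} - \frac{1}{B}\sum_{n=1}^{\infty}\frac{n \bmod B}{n^s}.
\]
The first piece is immediately $\frac{1}{B}\zeta(s-1)$; note that the hypothesis $\operatorname{Re}(s)>2$ is exactly what is needed to make $\zeta(s-1)$ converge, matching the convergence convention adopted earlier in the section.

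Next I would handle the second piece by grouping terms according to the residue $k = n\bmod B$, writing $n=Bm+k$ with $0 \leq k \leq B-1$ and $m\geq 0$. The $k=0$ terms vanish, so
\[
\sum_{n=1}^{\infty}\frac{n \bmod B}{n^s} = \sum_{k=1}^{B-1}k\sum_{m=0}^{\infty}\frac{1}{(Bm+k)^s} = \frac{1}{B^s}\sum_{k=1}^{B-1}k\sum_{m=0}^{\infty}\frac{1}{(m+k/B)^s} = \frac{1}{B^s}\sum_{k=1}^{B-1}k\,\zeta(s,k/B),
\]
using the defining series \eqref{hurwitz} for the Hurwitz zeta function (note $k/B \in (0,1)$ for $1\leq k \leq B-1$, so this is a legitimate evaluation). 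Substituting this back gives the stated identity.

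There is no real obstacle here; the only point requiring minor care is the bookkeeping of the powers of $B$ produced by pulling $B^{-s}$ out of $(Bm+k)^{-s}$, and the observation that the sum over $k$ can start at $k=1$ since the $k=0$ contribution is zero. Compared with the proof of Theorem \ref{cthm1}, this proof is actually simpler because it does not require the $q$-expansion--style manipulation used there; it just hinges on the explicit closed form $c_B(\{1\}^n)=\lfloor n/B\rfloor$ and splitting by residue class modulo $B$.
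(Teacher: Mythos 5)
Your proof is correct and follows essentially the same route as the paper: both reduce $c_B(\{1\}^n)$ to $\lfloor n/B\rfloor$ via \eqref{carry2}, split off $\tfrac{1}{B}\zeta(s-1)$, and evaluate the remainder by grouping $n$ into residue classes modulo $B$ to produce the Hurwitz zeta terms (your $n \bmod B$ is just $B$ times the paper's $\operatorname{frac}(n/B)$). No issues.
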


\begin{proof}
Let $\text{frac}(x):=x-\lfloor x \rfloor$ denote the fractional part of $x\in \mathbb R$. Then by \eqref{carry2} we have 
\begin{flalign}\label{fraceq}
 \sum_{n=1}^{\infty}\frac{c_B(\{1\}^n)}{n^s}= \sum_{n=1}^{\infty}\frac{\lfloor n/B \rfloor}{n^s}=\frac{1}{B}\sum_{n=1}^{\infty}\frac{1}{n^{s-1}}-\sum_{n=1}^{\infty}\frac{\text{frac}(n/B)}{n^s}.
\end{flalign}
If we set $n=mB+k$ for $m\geq 0,  0\leq k \leq B-1,$ then ${\text{frac}(n/B)}=k/B$ and we can write
\begin{equation}\label{fraceq2}
\sum_{n=1}^{\infty}\frac{\text{frac}(n/B)}{n^s}=\sum_{k=1}^{B-1}\sum_{m=0}^{\infty}\frac{k/B}{(mB+k)^s}=\frac{1}{B^{s+1}}\sum_{k=1}^{B-1}k\sum_{m=0}^{\infty}\frac{1}{(m+k/B)^s}
.\end{equation}
Comparing \eqref{hurwitz}, \eqref{fraceq} and \eqref{fraceq2} gives the theorem.
\end{proof}


\section{Computing the correction term}
A slightly opaque aspect of these formulas is the presence of $\widehat{c}_B$, for the $s_B$ are computed from digits prescribed at the outset and thus feel like reasonably concrete objects, 
whereas 
carried numbers only arise subsequently during the addition procedure so they feel less tangible. Of course, we can solve identities from Section 1 for the $\widehat{c}_B$ terms. 
%
%
Alternatively, here we note a recursive algorithm to explicitly compute the carry sum $c_B$, as well as the terminal carry $\beta$, and thus to write $\widehat{c}_B(a_1, a_2, ..., a_r)$ directly from the digits of $a_1, a_2,...,a_r$. 

Let $\delta_i\geq 0$ denote the number carried over from the $B^i$ column ($i\geq 0$) in the addition algorithm when computing $a_1+a_2+...+a_r$. Let $t$ denote the maximum power of $B$ occurring in the base-$B$ expansions of all the $a_i$, viz. the maximum value of $t_i$ over all 
\begin{equation}\label{a_i}
a_{i}=\alpha_{i,t_i} B^{t_i}+\alpha_{i,t_i-1} B^{t_i-1}+...+\alpha_{i,2} B^{2}+\alpha_{i,1} B+\alpha_{i,0} \end{equation} with integer {digits} $0\leq \alpha_{i,j} <B$. 
Then one can 
write the carries $\delta_0, \delta_1, \delta_2,..., \delta_t$ recursively:
\begin{flalign}
\delta_0=\left \lfloor \frac{\alpha_{1,0}+\alpha_{2,0}+...+\alpha_{r,0}}{B} \right\rfloor
\end{flalign}
and, for $1\leq j\leq t$,
\begin{flalign}
\delta_j&=\left \lfloor \frac{\alpha_{1,j}+\alpha_{2,j}+...+\alpha_{r,j}+\delta_{j-1}}{B} \right\rfloor,
\end{flalign}
where the $\alpha_{i,j}$ are the base-$B$ digits of the integer $a_i$ as in \eqref{a_i}. 
The reader can verify that the right-hand sides above do equal the carried numbers $\delta_j$. Then we have the following. 

\begin{proposition} 
For integers $a_i \geq 0$, the base-$B$ carry sum resulting from the computation of $a_1+a_2+...+a_r$ is given by
$$c_B(a_1, a_2, ..., a_r)=\left \lfloor \frac{\alpha_{1,0}+\alpha_{2,0}+...+\alpha_{r,0}}{B} \right\rfloor+\sum_{j\geq1}\left \lfloor \frac{\alpha_{1,j}+\alpha_{2,j}+...+\alpha_{r,j}+\delta_{j-1}}{B} \right\rfloor.$$
Moreover, the terminal carry $\beta$ is $\delta_t$.
\end{proposition}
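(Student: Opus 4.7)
The plan is to argue by induction on the column index $j$ that each $\delta_j$, as defined recursively in the statement, coincides with the actual carry produced at the $B^j$-column of the traditional addition algorithm applied to $a_1+a_2+\cdots+a_r$. Once this identification is in hand, both assertions of the proposition follow at once from the definitions: the carry sum $c_B(a_1,\ldots,a_r)$ is by definition the total of all carried numbers produced, hence equals $\delta_0+\sum_{j\geq 1}\delta_j$, and the terminal carry $\beta$ is by definition the carry $\delta_t$ leaving the leftmost column of digits.

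For the induction, I would first fix the mechanics of one column step: at column $j$ the algorithm adds the digits $\alpha_{1,j},\ldots,\alpha_{r,j}$ together with the incoming carry from column $j-1$, writes the result modulo $B$ as the $j$-th digit of the answer, and passes $\lfloor(\mathrm{sum})/B\rfloor$ along as the new carry. With the convention $\delta_{-1}:=0$ for the (nonexistent) incoming carry at the units column, the base case $j=0$ is immediate from this description. For the inductive step, assuming $\delta_{j-1}$ equals the algorithm's carry into column $j$, the recursive formula for $\delta_j$ simply repeats the algorithm's prescription for the carry out of column $j$, so the two agree.

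The one point worth flagging, which is a matter of bookkeeping rather than a real obstacle, is the behavior at column $t$: if $\delta_t\geq B$ then the terminal carry is multi-digit. By the convention adopted in the paragraph preceding \eqref{correction0}, however, this $\beta=\delta_t$ is placed down as the leftmost digits of $a_1+\cdots+a_r$ rather than propagated into further carry operations, so the recursion genuinely terminates at $j=t$ and the sum in the proposition is finite, matching the algorithm's list of carries term by term. This convention is also precisely why $s_B(\beta)$ must be subtracted in the definition of $\widehat{c}_B$: the leftover carry contributes to the digit sum of the answer without being counted in any column-addition step.
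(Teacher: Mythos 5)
Your proposal is correct and matches the paper's (implicit) argument: the paper simply asserts that the reader can verify the recursively defined $\delta_j$ are the actual column carries, after which the proposition is immediate from the definitions of $c_B$ and $\beta$. Your induction on $j$, together with the remark about the possibly multi-digit terminal carry $\delta_t$, just makes that verification explicit.
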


Using this proposition and the preceding algorithm, together with the definition \eqref{correction0} of $\widehat{c}_B$, the computation of $\widehat{c}_B(a_1,a_2,...,a_r)$ from the digits of $a_1, a_2,...,a_r$ is straightforward.
%



%

\section*{Acknowledgments}
We wish to thank the anonymous referee for useful suggestions that strengthened this work. The first author was partially funded by HCSSiM at Hampshire College during the preparation of this paper; special thanks to Prof. David Kelly.

\end{document}